\documentclass[10pt, conference]{IEEEtran}
\usepackage{times}

\usepackage[numbers]{natbib}
\usepackage{multicol}
\usepackage[bookmarks=true]{hyperref}

\usepackage{graphicx} 
\usepackage{times} 
\usepackage{amsmath} 
\usepackage{amssymb}  
\usepackage{multirow}
\usepackage{pdfpages}
\usepackage{comment}

\usepackage{enumitem}
\usepackage{booktabs}
\usepackage{float}
\usepackage{caption}
\usepackage{subcaption}
\def\rot#1{\rotatebox{90}{#1}}

\newtheorem{theorem}{Theorem}[section]

\def\reals{\mathbb{R}}

\newcommand{\vv}[1] {\mathbf{#1}}

\IEEEoverridecommandlockouts
\pdfinfo{
   /Author (Amir Ali Ahmadi, Georgina Hall, Ameesh Makadia, Vikas Sindhwani)
   /Title  (Geometry of 3D Environments and Sum of Squares Polynomials)
   /CreationDate (D:20170131)
   /Subject (Geometry of 3D Environments)
   /Keywords (Sum of squares optimization, semidefinite programming, bounding volumes, penetration and distance measures)
}

\begin{document}

\title{Geometry of 3D Environments\\ and Sum of Squares Polynomials}

\author{
	Amir Ali Ahmadi$^{1}$~~~Georgina Hall$^{1}$~~~Ameesh Makadia$^{2}$~~~Vikas Sindhwani$^{3}$
	\thanks{$^{1}$Dept. of Operations Research and Financial Engineering, Princeton University, Princeton NJ, USA. Partially supported by a Google Faculty Research Award.
		{\tt\small a\char`_a\char`_a@princeton.edu, gh4@princeton.edu}}%
	\thanks{$^{2}$Google, New York, USA {\tt\small makadia@google.com}}%
	\thanks{$^{2}$Google Brain, New York, USA {\tt\small sindhwani@google.com}}%
}

\newenvironment{gh}{\color{black}}

\makeatletter
\let\@oldmaketitle\@maketitle
\renewcommand{\@maketitle}{\@oldmaketitle
	\begin{center}
		\centering
		\includegraphics[height=4cm, width=0.3\linewidth]{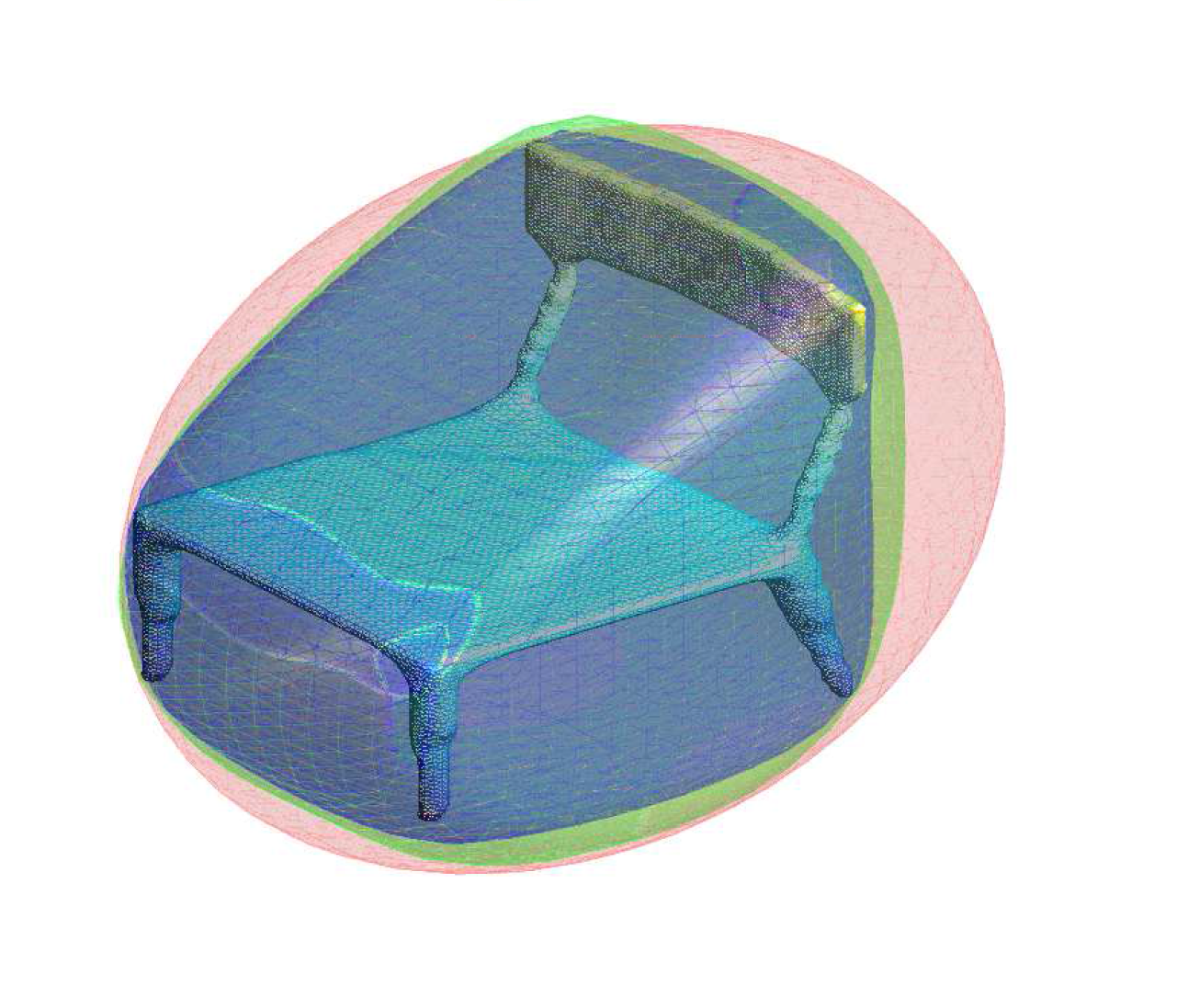}
		\includegraphics[height=4cm, width=0.3\linewidth]{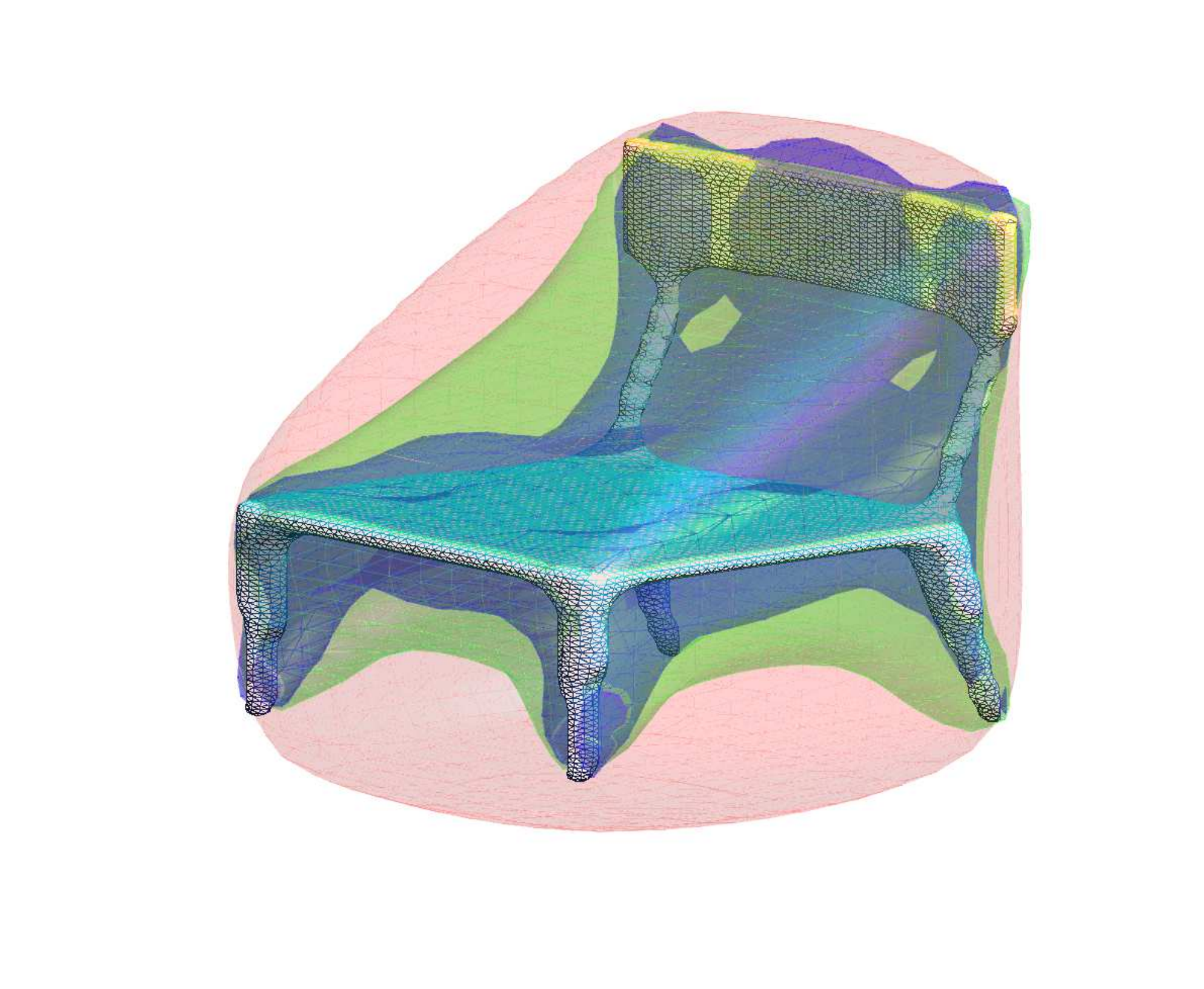}
		\includegraphics[height=4cm, width=0.3\linewidth]{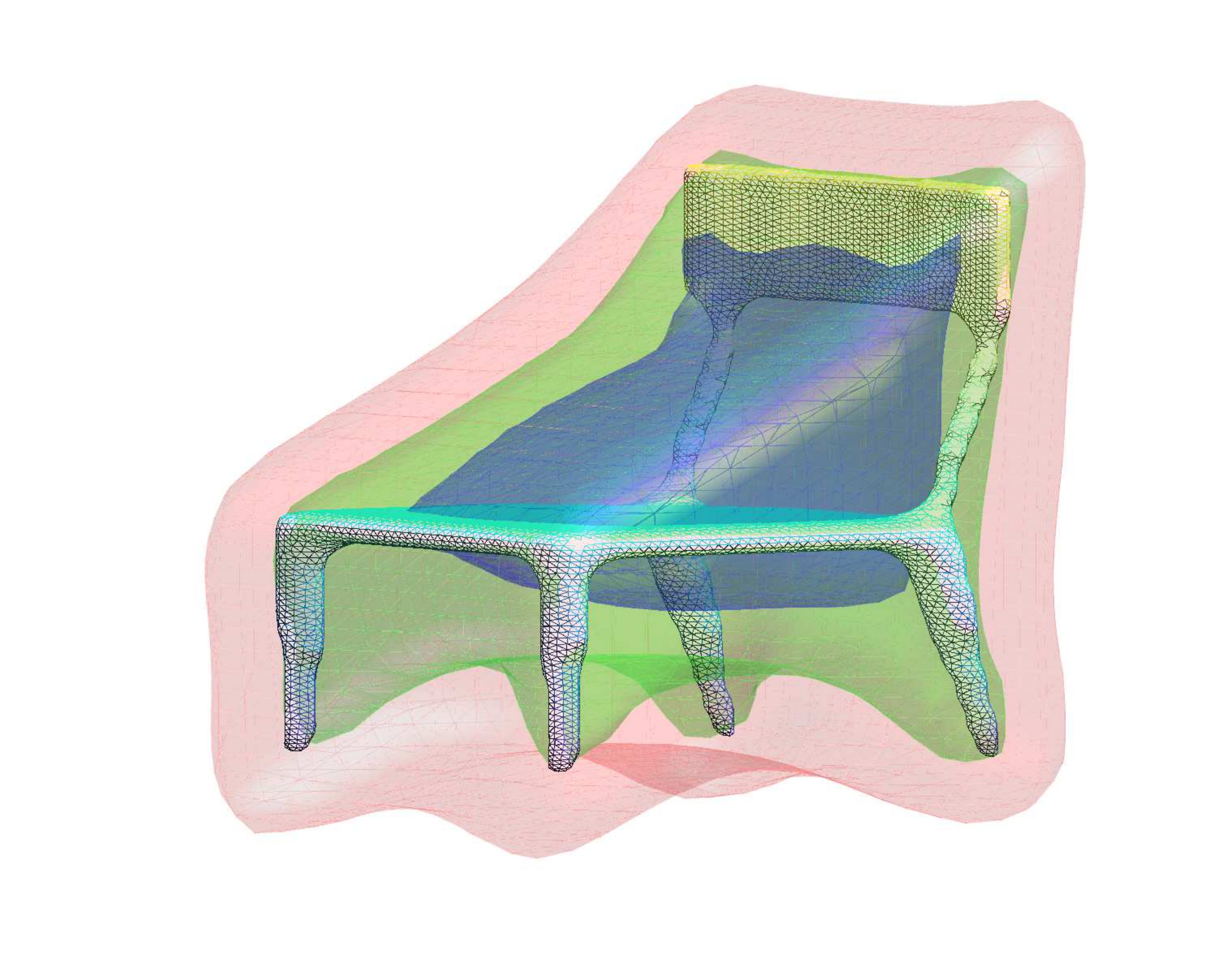}
		\captionof{figure}{Sublevel sets of sos-convex polynomials of increasing degree (left); sublevel sets of sos polynomials of increasing nonconvexity (middle); growth and shrinkage of an sos-body with sublevel sets (right)}
		\label{fig:intro_pic}
	\end{center}%
}

\makeatother

\maketitle

\begin{abstract}
Motivated by applications in robotics and computer vision, we study problems related to spatial reasoning of a 3D environment using sublevel sets of polynomials. These include: tightly containing a cloud of points (e.g., representing an obstacle) with convex or nearly-convex basic semialgebraic sets, computation of Euclidean distance between two such sets, separation of two convex basic semalgebraic sets that overlap, and tight containment of the union of several basic semialgebraic sets with a single convex one. We use algebraic techniques from sum of squares optimization that reduce all these tasks to semidefinite programs of small size and present numerical experiments in realistic scenarios. 
\end{abstract}

\IEEEpeerreviewmaketitle
\section{INTRODUCTION}
A central problem in robotics, computer graphics, virtual and augmented reality (VR/AR), and many applications involving complex physics simulations is the  accurate, real-time determination of proximity relationships between three-dimensional objects~\cite{EricsonBook} situated in a cluttered environment. In robot navigation and manipulation tasks, path planners need to compute a dynamically feasible trajectory connecting an initial state to a goal configuration while avoiding obstacles in the environment. In VR/AR applications, a human immersed in a virtual world may wish to touch computer generated objects that must respond to contacts in physically realistic ways. Likewise, when collisions are detected, 3D gaming engines and physics simulators (e.g., for molecular dynamics) need to activate appropriate directional forces on interacting entities. All of these applications require geometric notions of separation and penetration between  representations of three-dimensional objects to be continuously monitored.

A rich class of computational geometry problems arises in this context, when 3D objects are outer approximated by convex or nonconvex bounding volumes. In the case where the bounding volumes are convex, the Euclidean distance between them can be computed very precisely, providing a reliable certificate of safety for the objects they enclose. In the case where the bounding volumes are nonconvex, distance computation can be done either approximately via convex decomposition heuristics~\cite{ConvexDecomposition1,ConvexDecomposition2} which cover the volumes by a finite union of convex shapes, or exactly by using more elaborate algebraic optimization hierarchies that we discuss in this paper. When 3D objects overlap, quantitative measures of degree of penetration are needed in order to optimally resolve collisions, e.g., by a gradient-based trajectory optimizer. Multiple such measures have been proposed in the literature. The {\it penetration depth} is the minimum magnitude translation that brings the overlapping objects out of collision.  The {\it growth distance}~\cite{GrowthDistance} is the minimum shrinkage of the two bodies required to reduce volume penetration down to merely surface touching. Efficient computation of penetration measures is also a problem of interest to this paper.

\subsection{Contributions and organization of the paper}

In this work, we propose to represent the geometry of a given 3D environment comprising multiple static or dynamic rigid bodies using sublevel sets of  polynomials. The paper is organized as follows: In Section \ref{sec:sos.convex}, we provide an overview of the algebraic concepts of sum of squares (sos) and sum of squares-convex (sos-convex) polynomials as well as their relation to semidefinite programming and polynomial optimization. In Section \ref{sec:3D.point.cont}, we consider the problem of containing a cloud of 3D points with tight-fitting convex or nearly convex sublevel sets of polynomials. In particular, we propose and justify a new volume minimization heuristic for these sublevel sets which empirically results in tighter fitting polynomials than previous proposals~\cite{Magnani},~\cite{lasserre2016_inverse_moment}. Additionally, we give a procedure for explicitly tuning the extent of convexity imposed on these sublevel set bounding volumes using sum of squares optimization techniques. If convexity is imposed, we refer to them as {\it sos-convex bodies}; if it is not, we term them simply as {\it sos-bodies}. (See Section~\ref{sec:sos.convex} for a more formal definition.) We show that the bounding volumes we obtain are highly compact and adapt to the shape of the data in more flexible ways than canned convex primitives typically used in standard bounding volume hierarchies; see Table~\ref{tab:bounding.volumes}. The construction of our bounding volumes involves small-scale semidefinite programs (SDPs) that can fit, in an offline preprocessing phase, 3D meshes with tens of thousands of data points in a few seconds. In Section \ref{sec:distance}, we give sum of squares algorithms for measuring notions of separation or penetration, including  Euclidean distance and growth distance~\cite{GrowthDistance}, of two bounding volumes representing obstacles. We show that even when convexity is lacking, we can efficiently compute (often tight) lower bounds on these measures.
In Section~\ref{sec:cont.poly.sub}, we consider the problem of grouping several obstacles (i.e., bounding volumes) within one, with the idea of making a map of the 3D environment with a lower level of resolution. A semidefinite programming based algorithm for this purpose is proposed and demonstrated via an example. We end in Section~\ref{sec:conclusions} with some future directions. 

\subsection{Preview of some experiments} Figure \ref{fig:intro_pic} gives a preview of some of the methods developed in this paper using as an example a 3D chair point cloud. On the left, we enclose the chair within the 1-sublevel set of three sos-convex polynomials with increasing degree ($2$, $4$ and $6$) leading to correspondingly tighter fits. The middle plot presents the 1-sublevel set of three degree-6 sos polynomials with increasing nonconvexity showing how tighter representations can be obtained by relaxing convexity. The right plot shows the 2, 1, and 0.75 sublevel sets of a single degree-6 sos polynomial; the 1-sublevel set colored green encloses the chair, while greater or lower values of the level set define grown and shrunk versions of the object. The computation of Euclidean distances and sublevel-based measures of separation and penetration can be done in a matter of milliseconds with techniques described in this paper. 



\section{Sum of squares and sos-convexity}\label{sec:sos.convex}

In this section, we briefly review the notions of \emph{sum of squares polynomials}, \emph{sum of squares-convexity,} and \emph{polynomial optimization} which will all be central to the geometric problems we discuss later. We refer the reader to the recent monograph~\cite{lasserreBook} for a more detailed overview of the subject. 

Throughout, we will denote the set of $n \times n$ symmetric matrices by $S^{n \times n}$ and the set of degree-$2d$ polynomials with real coefficients by $\mathbb{R}_{2d}[x]$. We say that a polynomial $p(x_1,\ldots,x_n) \in \mathbb{R}_{2d}[x]$ is \emph{nonnegative} if $p(x_1,\ldots,x_n)\geq 0, \forall x\in\mathbb{R}^n$. In many applications (including polynomial optimization that we will cover later), one would like to constrain certain coefficients of a polynomial so as to make it nonnegative. Unfortunately, even testing whether a given polynomial (of degree $2d\geq 4$) is nonnegative is NP-hard. As a consequence, we would like to replace the intractable condition that $p$ be nonnegative by a sufficient condition for it that is more tractable. One such condition is for the polynomial to have a sum of squares decomposition. We say that a polynomial $p$ is a \emph{sum of squares (sos)} if there exist polynomials $q_i$ such that $p=\sum_{i} q_i^2$. From this definition, it is clear that any sos polynomial is nonnegative, though not all nonnegative polynomials are sos; see, e.g., \cite{reznick2000some},\cite{laurent2009sums} for some counterexamples. Furthermore, requiring that a polynomial $p$ be sos is a computationally tractable condition as a consequence of the following characterization: A polynomial $p$ of degree $2d$ is sos if and only if there exists a positive semidefinite matrix $Q$ such that $p(x)=z(x)^TQz(x),$ where $z(x)$ is the vector of all monomials of degree up to $d$ \cite{parrilo2000structured}. The matrix $Q$ is sometimes called the Gram matrix of the sos decomposition and is of size $\binom{n+d}{d}\times \binom{n+d}{d}$. (Throughout the paper, we let $N\mathcal{\mathop{:}}=\binom{n+d}{d}.$) The task of finding a positive semidefinite matrix $Q$ that makes the coefficients of $p$ all equal to the coefficients of $z(x)^TQz(x)$ is a semidefinite programming problem, which can be solved in polynomial time to arbitrary accuracy~\cite{vandenberghe1996semidefinite}.



The concept of sum of squares can also be used to define a sufficient condition for convexity of polynomials known as \emph{sos-convexity}. We say that a polynomial $p$ is sos-convex if the polynomial $y^T \nabla^2 p(x)y$ in $2n$ variables $x$ and $y$ is a sum of squares. Here, $\nabla^2 p(x)$ denotes the Hessian of $p$, which is a symmetric matrix with polynomial entries.
For a polynomial of degree $2d$ in $n$ variables, one can check that the dimension of the Gram matrix associated to the sos-convexity condition is $\tilde{N}\mathcal{\mathop{:}}=n \cdot \binom{n+d-1}{d-1}$. It follows from the second order characterization of convexity that any sos-convex polynomial is convex, as $y^T\nabla^2 p(x)y$ being sos implies that $\nabla^2 p(x) \succeq 0, ~\forall x.$ The converse however is not true, though convex but not sos-convex polynomials are hard to find in practice; see \cite{ahmadi2013complete}. Through its link to sum of squares, it is easy to see that testing whether a given polynomial is sos-convex is a semidefinite program. By contrast, testing whether a polynomial of degree $2d \geq 4$ is convex is NP-hard \cite{ahmadi2013np}. 


A \emph{polynomial optimization problem} is a problem of the form
\begin{align}
\min_{x \in K} p(x), \label{eq:basic.opt}
\end{align}
where the objective $p$ is a (multivariate) polynomial and the feasible set $K$ is a basic semialgebraic set; i.e., a set defined by polynomial inequalities: $$K:=\{x~|~g_i(x)\geq 0, i=1,\ldots,m\}.$$


It is straightforward to see that problem (\ref{eq:basic.opt}) can be equivalently formulated as that of finding the largest constant $\gamma$ such that  $p(x)-\gamma\geq 0,\forall x\in K.$ {\gh It is known that, under mild conditions (specifically, under the assumption that $K$ is Archimedean \cite{laurent2009sums}), the condition $p(x)-\gamma > 0, \forall x \in K$, is equivalent to the existence of sos polynomials $\sigma_i(x)$ such that $p(x)-\gamma=\sigma_0(x)+\sum_{i=1}^m \sigma_i(x) g_i(x)$. Indeed, it is at least clear that if $x \in K$, i.e., $g_i(x)\geq 0$, then $\sigma_0(x)+\sum_{i=1}^m \sigma_i(x)g_i(x) \geq 0$ which means that $p(x)-\gamma \geq 0$. The converse is less trivial and is a consequence of the Putinar Positivstellensatz \cite{putinar1993positive}.} Using this result, problem (\ref{eq:basic.opt}) can be rewritten as
\begin{align} 
&\underset{\gamma, \sigma_i}{\max}~ \gamma \nonumber \\
&\text{s.t. } p(x)-\gamma=\sigma_0+\sum_{i=1}^m \sigma_i(x)g_i(x),\label{eq:basic.opt.sos}\\
&\sigma_i \text{ sos, } i=0,\ldots,m. \nonumber
\end{align}

For any fixed upper bound on the degrees of the polynomials $\sigma_i$, this is a semidefinite programming problem which produces a lower bound on the optimal value of (\ref{eq:basic.opt}). As the degrees of $\sigma_i$ increase, these lower bounds are guaranteed to converge to the true optimal value of (\ref{eq:basic.opt}). Note that we are making \emph{no convexity assumptions} about the polynomial optimization problem and yet solving it \emph{globally} through a sequence of semidefinite programs.

{\gh \textbf{Sum of squares and polynomial optimization in robotics.} We remark that sum of squares techniques have recently found increasing applications to a whole host of problems in robotics, including constructing Lyapunov functions \cite{ahmadi2014towards}, locomotion planning \cite{kuindersma2016optimization}, design and verification of provably safe controllers \cite{majumdar2013control,majumdar2014control}, grasping and manipulation \cite{dai2015synthesis,posa2016stability, zhou2016convex}, robot-world calibration \cite{heller2014hand}, and inverse optimal control \cite{pauwels2014inverse}, among others. 
	
	{\ghh We also remark that a different use of sum of squares optimization for finding minimum bounding volumes that contain semialgebraic sets has been considered in \cite{Henrion,Henrion1} along with some interesting control applications (see Section~\ref{sec:cont.poly.sub} for a brief description).}


\section{3D point cloud containment} \label{sec:3D.point.cont}

Throughout this section, we are interested in finding a body of minimum volume, parametrized as the 1-sublevel set of a polynomial of degree $2d$, which encloses a set of given points  $\{x_1,\ldots,x_m\}$ in $\mathbb{R}^n$.

\subsection{Convex sublevel sets}\label{subsec:conv.cont}

We focus first on finding a \emph{convex} bounding volume. Convexity is a common constraint in the bounding volume literature and it makes certain tasks (e.g., distance computation among the different bodies) simpler. In order to make a set of the form $\{x\in\mathbb{R}^3| \ p(x)\leq 1\}$ convex, we will require the polynomial $p$ to be convex. (Note that this is a sufficient but not necessary condition.) Furthermore, to have a tractable formulation, we will replace the convexity condition with an sos-convexity condition as described previously. Even after these relaxations, the problem of minimizing the volume of our sublevel sets remains a difficult one. The remainder of this section discusses several heuristics for this task.

\subsubsection{The Hessian-based approach}\label{subsec:Boyd.method}

In \cite{Magnani}, Magnani et al. propose the following heuristic to minimize the volume of the 1-sublevel set of an sos-convex polynomial:
\begin{equation}
\begin{aligned}
& &&\min_{p \in \mathbb{R}_{2d}[x],H \in S^{\tilde{N} \times \tilde{N}}} -\log \det(H) \\
&\text{s.t. } &&p \text{ sos}, \\
& &&y^T \nabla^2 p(x)y=w(x,y)^THw(x,y),~H\succeq 0, \label{eq:Magnani.log.det}\\
& && p(x_i)\leq 1, i=1,\ldots,m,
\end{aligned}
\end{equation}
where $w(x,y)$ is a vector of monomials in $x$ and $y$ of degree $1$ in $y$ and $d-1$ in $x$.
This problem outputs a polynomial $p$ whose 1-sublevel set corresponds to the bounding volume that we are interested in. A few remarks on this formulation are in order:
\begin{itemize}
	\item The last constraint simply ensures that all the data points are within the 1-sublevel set of $p$ as required. 
	\item The second constraint imposes that $p$ be sos-convex. The matrix $H$ is the Gram matrix associated with the sos condition on $y^T\nabla^2 p(x)y$. 
	\item The first constraint requires that the polynomial $p$ be sos. This is a necessary condition for boundedness of (\ref{eq:Magnani.log.det}) when $p$ is parametrized with affine terms. To see this, note that for any given positive semidefinite matrix $Q$, one can always pick the coefficients of the affine terms in such a way that the constraint $p(x_i)\leq 1$ for $i=1,\ldots,m$ be trivially satisfied. Likewise one can pick the remaining coefficients of $p$ in such a way that the sos-convexity condition is satisfied.
	The restriction to sos polynomials, however, can be done without loss of generality. Indeed, suppose that the minimum volume sublevel set was given by $\{x~|~ p(x)\leq 1\}$ where $p$ is an sos-convex polynomial. As $p$ is convex and nonaffine, $\exists \gamma\geq 0$ such that $p(x)+\gamma\geq 0$ for all $x$. Define now $q(x)\mathrel{\mathop{:}}=\frac{p(x)+\gamma}{1+\gamma}.$ We have that $\{x~|~ p(x)\leq 1\}=\{ x~|~ q(x)\leq 1\}$, but here, $q$ is sos as it is sos-convex and nonnegative \cite[Lemma 8]{helton2010semidefinite}. 
\end{itemize}


{\gh The objective function of the above formulation is motivated in part by the degree $2d=2$ case. Indeed, when $2d=2$, the sublevel sets of convex polynomials are ellipsoids of the form $\{x~|~ x^TPx+b^Tx+c\leq 1\}$ and their volume is given by $\frac43 \pi \cdot \sqrt{\det(P^{-1})}$. Hence, by minimizing $-\log \det(P)$, we would exactly minimize volume. As the matrix $P$ above is none other than the Hessian of the quadratic polynomial $x^TPx+b^Tx+c$ (up to a multiplicative constant), this partly justifies the formulation given in \cite{Magnani}. Another justification for this formulation is given in \cite{Magnani} itself and relates to curvature of the polynomial $p$. Indeed, the curvature of $p$ at a point $x$ along a direction $y$ is proportional to $y^T\nabla^2 p(x)y$. By imposing that $y^T\nabla^2 p(x)y=w(x,y)^THw(x,y),$ with $H \succeq 0$, and then maximizing $\log(\det(H))$, this formulation seeks to increase the curvature of $p$ along all directions so that its 1-sublevel set can get closer to the points $x_i$. Note that curvature maximization in all directions without regards to data distribution can be counterproductive in terms of tightness of fit, particularly in regions where the data geometry is flat (an example of this is given in Figure \ref{fig:comparison.with.Boyd}).}


{\gh A related minimum volume heuristic that we will also experiment with replaces the $\log \det$ objective with a linear one. More specifically, we introduce an extra decision variable $V \in S^{\tilde{N}\times \tilde{N}}$ and minimize $\mbox{trace}(V)$ while adding an additional constraint $\begin{bmatrix} V & I \\ I & H \end{bmatrix} \succeq 0.$
Using the Schur complement, the latter constraint can be rewritten as $V\succeq H^{-1}$. As a consequence, this trace formulation minimizes the \emph{sum} of the inverse of the eigenvalues of $H$ whereas the $\log \det$ formulation described in (\ref{eq:Magnani.log.det}) minimizes the \emph{product} of the inverse of the eigenvalues. }

\subsubsection{Our approach} \label{subsec:our.approach}

We propose here an alternative heuristic for obtaining a tight-fitting convex body containing points in $\mathbb{R}^n.$ Empirically, we validate that it tends to consistently return convex bodies of smaller volume than the ones obtained with the methods described above (see Figure~\ref{fig:comparison.with.Boyd} below for an example). It also generates a relatively smaller convex optimization problem. Our formulation is as follows:
\begin{align}
&\min_{p \in \mathbb{R}_{2d}[x],P \in S^{N \times N}} -\log \det(P) \nonumber\\
&\text{s.t. } \nonumber \\
&p(x)=z(x)^TP z(x), P\succeq 0, \nonumber\\
&p \text{ sos-convex},\label{eq:VAG.log.det}\\
& p(x_i)\leq 1, i=1,\ldots,m. \nonumber
\end{align}
{\gh One can also obtain a trace formulation of this problem by replacing the $\log \det$ objective by a trace one as it was done in the previous paragraph.}

Note that the main difference between (\ref{eq:Magnani.log.det}) and (\ref{eq:VAG.log.det}) lies in the Gram matrix chosen for the objective function. In (\ref{eq:Magnani.log.det}), the Gram matrix comes from the sos-convexity constraint, whereas in (\ref{eq:VAG.log.det}), the Gram matrix is generated by the sos constraint.

In the case where the polynomial is quadratic and convex, we saw that the formulation (\ref{eq:Magnani.log.det}) is exact as it finds the minimum volume ellipsoid containing the points. It so happens that the formulation given in (\ref{eq:VAG.log.det}) is also exact in the quadratic case, and, in fact, both formulations return the same optimal ellipsoid. As a consequence, the formulation given in (\ref{eq:VAG.log.det}) can also be viewed as a natural extension of the quadratic case. 

To provide more intuition as to why this formulation performs well, we interpret the 1-sublevel set $$S\mathrel{\mathop{:}}=\{x~|~p(x)\leq 1\}$$ of $p$ as the {\gh preimage of some set whose volume is being minimized. More precisely, consider the set $$T_1=\{z(x) \in \mathbb{R}^N~|~ x \in \mathbb{R}^n\}$$ which corresponds to the image of $\mathbb{R}^n$ under the monomial map $z(x)$ and the set $$T_2=\{y \in \mathbb{R}^N ~|~ y^TPy \leq 1 \},$$ for a positive semidefinite matrix $P$ such that $p(x)=z(x)^TPz(x).$ Then, the set $S$ is simply the preimage of the intersection of $T_1$ and $T_2$ through the mapping $z$. Indeed, for any $x \in S$, we have $p(x)=z(x)^TPz(x) \leq 1$. The hope is then that by minimizing the volume of $T_2$, we will minimize volume of the intersection $T_1 \cap T_2$ and hence that of its preimage through $z$, i.e., the set $S.$ 
	
	\begin{figure}[h]
		\centering
		\includegraphics[scale=0.55]{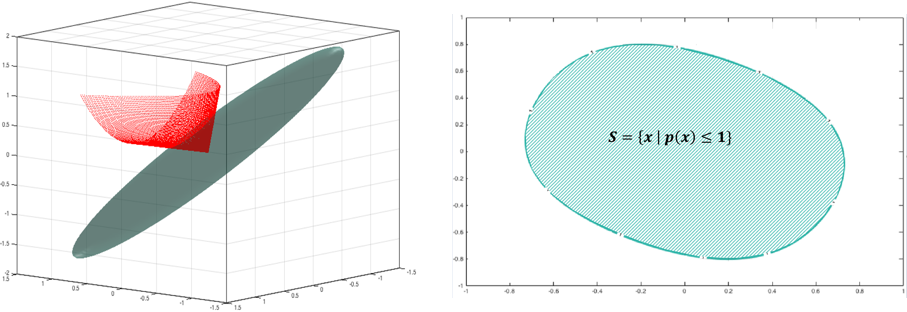}
		\caption{An illustration of the intuition behind the approach in Section \ref{subsec:our.approach}: the sets $T_1$ and $T_2$ (left) and $S$ (right)}
		\label{fig:illustration.proof}
	\end{figure}

	We illustrate this idea in Figure \ref{fig:illustration.proof}. Here, we have generated a random $3\times 3$ positive semidefinite matrix $P$ and a corresponding bivariate degree-4 sos polynomial $p(x_1,x_2)=z(x_1,x_2)^TPz(x_1,x_2)$, where $z(x_1,x_2)=(x_1^2,x_1x_2,x_2^2)^T$ is a map from $\mathbb{R}^2$ to $\mathbb{R}^3$. We have drawn in red the image of $\mathbb{R}^2$ under $z$ and in green the ellipsoid $\{y \in \mathbb{R}^3~|~y^TPy \leq 1\}.$ The preimage of the intersection of both sets seen in Figure~\ref{fig:illustration.proof} on the right corresponds to the 1-sublevel set of $p.$  }

\subsection{Relaxing convexity} \label{subsec:nonconvex}

Though containing a set of points with a convex sublevel set has its advantages, it is sometimes necessary to have a tighter fit than the one provided by a convex body, particularly if the object of interest is highly nonconvex. One way of handling such scenarios is via convex decomposition methods~\cite{ConvexDecomposition1, ConvexDecomposition2}, which would enable us to represent the object as a union of sos-convex bodies. Alternatively, one can aim for problem formulations where convexity of the sublevel sets is not imposed. In the remainder of this subsection, we first review a recent approach from the literature to do this and then present our own approach which allows for controlling the level of nonconvexity of the sublevel set. 


\subsubsection{The inverse moment approach}\label{subsubsec:Lasserre} In very recent work~\cite{lasserre2016_inverse_moment}, Lasserre and Pauwels propose an approach for containing a cloud of points with sublevel sets of polynomials (with no convexity constraint). Given a set of data points $x_1,\ldots,x_m\in\mathbb{R}^n$, it is observed in that paper that the sublevel sets of the degree $2d$ sos polynomial \begin{equation}\label{eq:inverse.moment.poly}
p_{\mu,d}(x)\mathrel{\mathop:}=z(x)^T M_d(\mu(x_1,\ldots,x_m))^{-1} z(x),
\end{equation}
tend to take the shape of the data accurately. Here, $z(x)$ is the vector of all monomials of degree up to $d$ and $M_d(\mu(x_1,\ldots,x_m))$ is the moment matrix of degree $d$ associated with the empirical measure $\mu\mathrel{\mathop:}=\frac{1}{m}\sum_{i=1}^{m}\delta_{x_i}$ defined over the data. This is an $\binom{n+d}{d} \times \binom{n+d}{d}$ symmetric positive semidefinite matrix which can be cheaply constructed from the data $x_1,\ldots,x_m\in\mathbb{R}^n$ (see~\cite{lasserre2016_inverse_moment} for details). One very nice feature of this method is that to construct the polynomial $p_{\mu, d}$ in (\ref{eq:inverse.moment.poly}) one only needs to invert a matrix (as opposed to solving a semidefinite program as our approach would require) after a single pass over the point cloud. The approach however does not a priori provide a particular sublevel set of $p_{\mu, d}$ that is guaranteed to contain all data points. Hence, once $p_{\mu, d}$ is constructed, one could slowly increase the value of a scalar $\gamma$ and check whether the $\gamma$-sublevel set of $p_{\mu, d}$ contains all points.

\subsubsection{Our approach and controlling convexity} \label{subsubsec:conv.control.ours} An advantage of our proposed formulation (\ref{eq:VAG.log.det}) is that one can easily drop the sos-convexity assumption in the constraints and thereby obtain a sublevel set which is not necessarily convex. 
This is not an option for formulation (\ref{eq:Magnani.log.det}) as the Gram matrix associated to the sos-convexity constraint intervenes in the objective.

Note that in neither this formulation nor the inverse moment approach of Lasserre and Pauwels, does the optimizer have control over the shape of the sublevel sets produced, which may be convex or far from convex. For some applications, it is useful to control in some way the degree of convexity of the sublevel sets obtained by introducing a parameter which when increased or decreased would make the sets more or less convex. This is what our following proposed optimization problem does via the parameter $c$, which corresponds in some sense to a measure of convexity:
\begin{align}
&\min_{p \in \mathbb{R}_{2d}[x],P \in S^{N \times N}} -\log \det(P) \nonumber\\
&\text{s.t. }\nonumber \\
&p=z(x)^TP z(x), P\succeq 0 \label{eq:VAG.param.convex}\\
& p(x)+ c (\sum_i x_i^2)^d \text{ sos-convex}. \nonumber \\
& p(x_i)\leq 1, i=1,\ldots,m.\nonumber
\end{align}
Note that when $c=0$, the problem we are solving corresponds exactly to (\ref{eq:VAG.log.det}) and the sublevel set obtained is convex. When $c>0$, we allow for nonconvexity of the sublevel sets. 
As we decrease $c$ towards zero, we obtain sublevel sets which get progressively more and more convex.

\subsection{Bounding volume numerical experiments} 
Figure \ref{fig:intro_pic} (left) shows the 1-sublevel sets of sos-convex bodies with degrees $2$, $4$, and $6$. A degree-$6$ polynomial gives a much tighter fit than an ellipsoid (degree 2). In the middle figure, we freeze the degree to be $6$ and increase the convexity parameter $c$ in the relaxed convexity formulation of problem~(\ref{eq:VAG.param.convex}); the 1-sublevel sets of the resulting sos polynomials with $c=0, 10, 100$ are shown. It can be seen that the sublevel sets gradually bend to better adapt to the shape of the object. The right figure shows the $2, 1,$ and $0.75$ sublevel sets of a degree-$6$ polynomial obtained by fixing $c=10$ in problem~(\ref{eq:VAG.param.convex}): the shape is retained as the body is expanded or contracted. 

{\gh Figure \ref{fig:comparison.with.Boyd} shows 1-sublevel sets of two degree-6 sos-convex polynomials. In red, we have plotted the sublevel set corresponding to maximizing curvature as explained in Section \ref{subsec:Boyd.method}. In green, we have plotted the sublevel set generated by our approach as explained in Section \ref{subsec:our.approach}. Note that our method gives a tighter-fitting sublevel set, which is in part a consequence of the flat data geometry for which the maximum curvature heuristic does not work as well.}

\begin{figure}[h]
	\centering
	\includegraphics[scale=0.3]{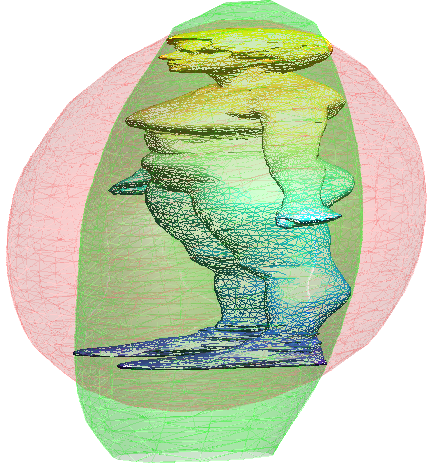}
	\caption{Comparison of degree-6 bounding volumes: our approach as described in Section \ref{subsec:our.approach} (green sublevel set) produces a tighter fitting bounding volume than the approach given in \cite{Magnani} and reviewed in Section \ref{subsec:Boyd.method} (red sublevel set). }
	\label{fig:comparison.with.Boyd}
\end{figure}

In Table \ref{tab:bounding.volumes}, we provide a comparison of various bounding volumes on Princeton Shape Benchmark datasets~\cite{ShapeData}. It can be seen that sos-convex bodies generated by higher degree polynomials provide much tighter fits than spheres or axis-aligned bounding boxes (AABB) in general. The proposed minimum volume heuristic of our formulation in (\ref{eq:VAG.log.det}) works better than that proposed in \cite{Magnani} (see (\ref{eq:Magnani.log.det})). In both formulations, typically, the log-determinant objective outperforms the trace objective. The convex hull is the tightest possible convex body. However, for smooth objects like the vase, the number of vertices describing the convex hull can be a substantial fraction of the original number of points in the point cloud. When convexity is relaxed, a degree-6 sos polynomial compactly described by just $84$ coefficients gives a tighter fit than the convex hull. For the same degree, solutions to our formulation (\ref{eq:VAG.param.convex}) with a positive value of $c$ outperform the inverse moment construction of~\cite{lasserre2016_inverse_moment}.

The bounding volume construction times are shown in Figure~\ref{fig:bv_construction_time} for sos-convex chair models. In comparison to the volume heuristics of~\cite{Magnani}, our heuristic runs noticeably faster as soon as degree exceeds $6$. {\gh We believe that this may come from the fact that the decision variable featuring in the objective in our case is a matrix of size $N \times N$, where $N=\binom{n+d}{d}$, whereas the decision variable featuring in the objective of~\cite{Magnani} is of size $\tilde{N} \times \tilde{N},$ where $\tilde{N}=n\cdot \binom{n+d-1}{d-1} > N.$}
Our implementation uses YALMIP~\cite{yalmip} with the splitting conic solver (SCS)~\cite{scs} as its backend SDP solver (run for 2500 iterations). Note that the inverse moment approach of~\cite{lasserre2016_inverse_moment} is the fastest as it does not involve any optimization and makes just one pass over the point cloud. However, this approach is not guaranteed to return a convex body, and for nonconvex bodies, tighter fitting polynomials can be estimated using log-determinant or trace objectives on our problem (\ref{eq:VAG.param.convex}).

\begin{table*}[t!]
	\begin{center}
		\begin{tabular}{|c|c|c|c|c|c|c|c|}
			\hline 
			& Object (id in \cite{ShapeData}) & & Human (10) & Chair (101) & Hand (181) & Vase (361) & Octopus (121)\\
			& $\#$ points/vertices in cvx hull&  & 9508/364 & 8499/320 & 7242/ 652 & 14859/1443 & 5944/414\\
			\hline
		 Section & Bounding Body $\downarrow$& Objective fcn $\downarrow$ & \multicolumn{5}{c|}{Volume $\downarrow$} \\
			\hline
			&Convex-Hull &  & 0.29 & 0.66 & 0.36 & 0.91 & 0.5 \\
			& Sphere & & 3.74 & 3.73 & 3.84 & 3.91 & 4.1\\
			& AABB & & 0.59 & 1.0 & 0.81 & 1.73 & 1.28\\
			\hline 
			&\multirow{2}{*}{sos-convex ($2d=2$)} &$logdet$ & 0.58 & 1.79 & 0.82 & 1.16 & 1.30\\
			& &$trace$ & 0.97 & 1.80 & 1.40 & 1.2 &1.76\\
			\cline{2-8}
			& \multirow{4}{*}{sos-convex ($2d=4$)} & $logdet(\vv{H}^{-1})$ & 0.57 & 1.55 & 0.69& 1.13 & 1.04\\
			& & $trace(\vv{H}^{-1})$ & 0.56 & 2.16 & 1.28& 1.09 &3.13 \\
			& & $logdet(\vv{P}^{-1})$ & 0.44 & 1.19 & 0.53& 1.05 &0.86\\
			\rot{\rlap{~\ref{subsec:conv.cont}}} & & $trace(\vv{P}^{-1})$ & 0.57& 1.25 & 0.92 & 1.09  &1.02\\
			\cline{2-8}
				 & \multirow{4}{*}{sos-convex ($2d=6$)} & $logdet(\vv{H}^{-1})$ & 0.57 & 1.27 & 0.58& 1.09& 0.93\\
	 & & $trace(\vv{H}^{-1})$ & 0.56 & 1.30 & 0.57 & 1.09 & 0.87\\
			& & $logdet(\vv{P}^{-1})$ & 0.41 &  1.02 & 0.45& 0.99 &0.74\\
			& & $trace(\vv{P}^{-1})$ & 0.45 & 1.21 & 0.48 & 1.03  &0.79\\
			\hline
			\rule{0pt}{8pt}
			& Inverse-Moment ($2d=2$) & & 4.02 & 1.42   & 2.14 & 1.36 &1.74\\
			& Inverse-Moment ($2d=4$) & & 1.53 & 0.95  & 0.90 & 1.25 &0.75\\
	 	\rot{\rlap{~\ref{subsubsec:Lasserre}}}	& Inverse-Moment ($2d=6$) & & 0.48 & 0.54   & 0.58 & 1.10 &0.57\\
			\hline
			& \multirow{2}{*}{sos ($2d=4, c=10$)}  
		& $logdet(\vv{P}^{-1})$ & 0.38 & 0.72 & 0.42 & 1.05 &0.63\\
			& & $trace(\vv{P}^{-1})$ & 0.51 & 0.78& 0.48 & 1.11 &0.71\\
			\cline{2-8}
		&	\multirow{2}{*}{sos ($2d=6, c=10$)}  
			& $logdet(\vv{P}^{-1})$ & 0.35 & 0.49& 0.34 &0.92& 0.41\\
		&	& $trace(\vv{P}^{-1})$ & 0.37 & 0.56 & 0.39 & 0.99&0.54\\
				\cline{2-8}
			&\multirow{2}{*}{sos ($2d=4, c=100$)}  
			& $logdet(\vv{P}^{-1})$  & 0.36 & 0.64 & 0.39 &1.05& 0.46\\
			\rot{\rlap{~\ref{subsubsec:conv.control.ours}}}  & & $trace(\vv{P}^{-1})$ & 0.42 & 0.74  & 0.46 & 1.10 &0.54\\
				\cline{2-8}
				& \multirow{2}{*}{sos ($2d=6, c=100$)}  
			& $logdet(\vv{P}^{-1})$  & 0.21 & 0.21 & 0.26 & 0.82 &0.28\\
			& & $trace(\vv{P}^{-1})$ & 0.22 & 0.30 & 0.29 & 0.85 &0.37\\
			\hline
		\end{tabular}
		\caption{ Comparison of the volume of various bounding bodies obtained from different techniques}
		\label{tab:bounding.volumes}
	\end{center}
\end{table*}

\begin{figure}[h]
	\centering
	\includegraphics[width=0.9\linewidth]{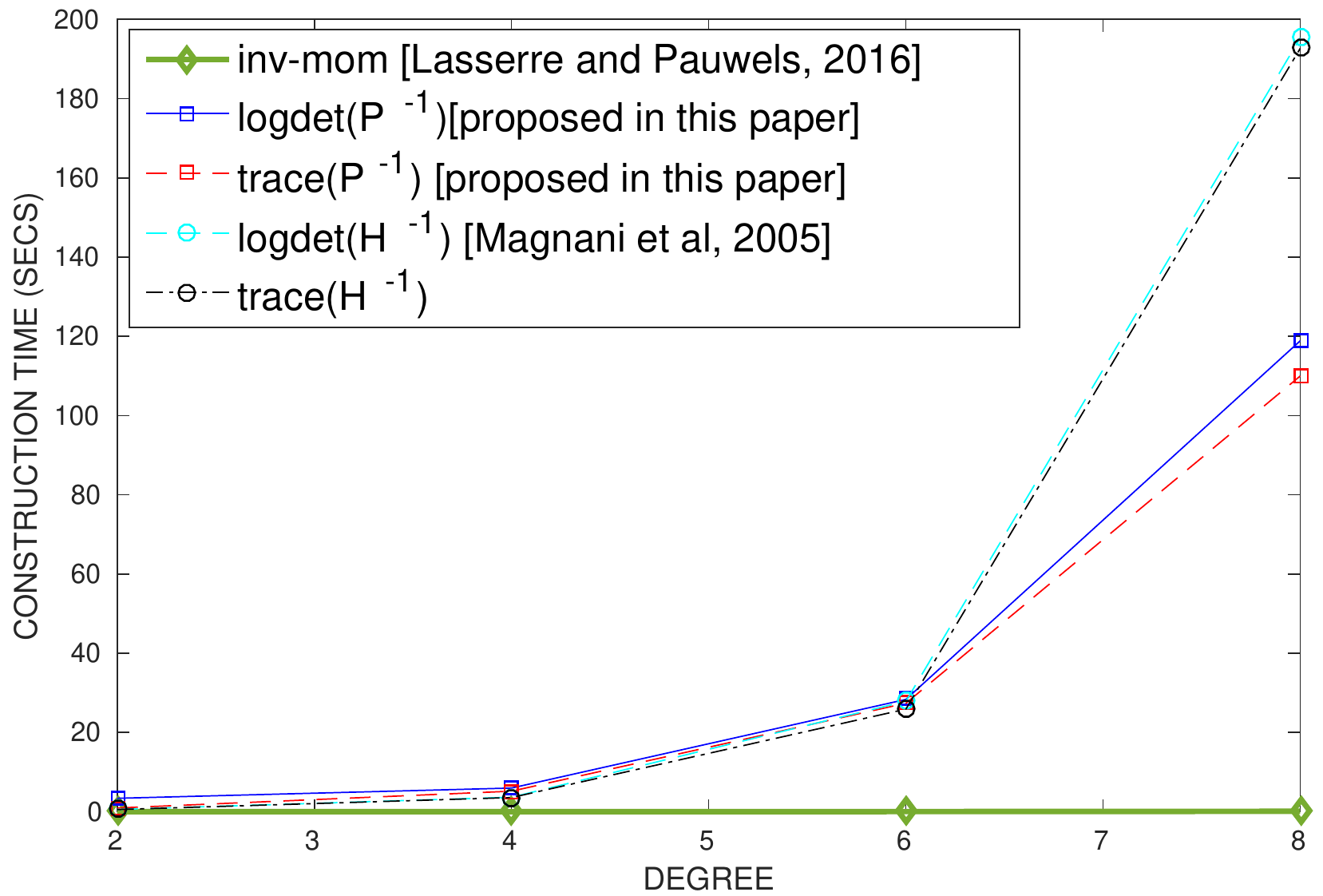}
	\caption{Bounding volume construction times}
	\label{fig:bv_construction_time}
\end{figure}

\section{Measures of Separation and Penetration}\label{sec:distance}

\subsection{Euclidean Distance} \label{subsec:eucl.dist}
{\gh In this section, we are interested in computing the Euclidean distance between two basic semialgebraic sets $$\mathcal{S}_1 \mathcal{\mathop{:}}=\{x \in \mathbb{R}^n ~|~ g_1(x)\leq 1, \ldots, g_m \leq 1\},$$ and $$\mathcal{S}_2\mathcal{\mathop{:}}=\{x \in \mathbb{R}^n ~|~ h_1(x) \leq 1, \ldots, h_r \leq 1\}$$ (where $g_1,\ldots,g_m$ and $h_1,\ldots,h_r$ are polynomials). This can be written as the following polynomial optimization problem:
	\begin{align}
	&\min_{x \in \mathcal{S}_1, y \in \mathcal{S}_2} ||x-y||_2^2. \label{eq:distance}
	\end{align}
	
	We will tackle this problem by applying the sos hierarchy described at the end of Section \ref{sec:sos.convex}. This will take the form of the following hierarchy of semidefinite programs
	\begin{equation} \label{eq:distance.lasserre}
	\begin{aligned}
	&\max_{\gamma \in \mathbb{R},\tau_i, \xi_j} \gamma\\
	&||x-y||_2^2-\gamma-\sum_{i=1}^m \tau_i(x,y) (1-g_i(x))  \\
	& \hspace{20mm} -\sum_{j=1}^r \xi_j(x,y)(1-h_j(y)) \text{ sos},\\
	&\tau_i(x,y), ~\xi_j(x,y) \text{ sos }, \forall i,\forall j,
	\end{aligned}
	\end{equation}
	where in the $d$-th level of the hierarchy, the degree of all polynomials $\tau_i$ and $\xi_j$ is upper bounded by $d$. Observe that the optimal value of each SDP produces a \emph{lower bound} on (\ref{eq:distance}) and that when $d$ increases, this lower bound can only improve.
	
	Amazingly, in all examples we tried (independently of convexity of $\mathcal{S}_1$ and $\mathcal{S}_2$!), the 0-th level of the hierarchy was already exact. By this we mean that the optimal value of (\ref{eq:distance.lasserre}) exactly matched that of (\ref{eq:distance}), already when the degree of the polynomials $\tau_i$ and $\xi_j$ was zero; i.e., when $\tau_i$ and $\xi_j$ were nonnegative scalars. An example of this phenomenon is given in Figure~\ref{fig:ex.distance.nonconvex} where the green bodies are each a (highly nonconvex) sublevel set of a quartic polynomial.
	
	When our SDP relaxation is exact, we can recover the points $x^*$ and $y^*$ where the minimum distance between sets is achieved from the eigenvector corresponding to the zero eigenvalue of the Gram matrix associated with the first sos constraint in (\ref{eq:distance.lasserre}). This is what is done in Figure~\ref{fig:ex.distance.nonconvex}.

	\begin{figure}[h] 
		\centering
		\includegraphics[scale=0.35]{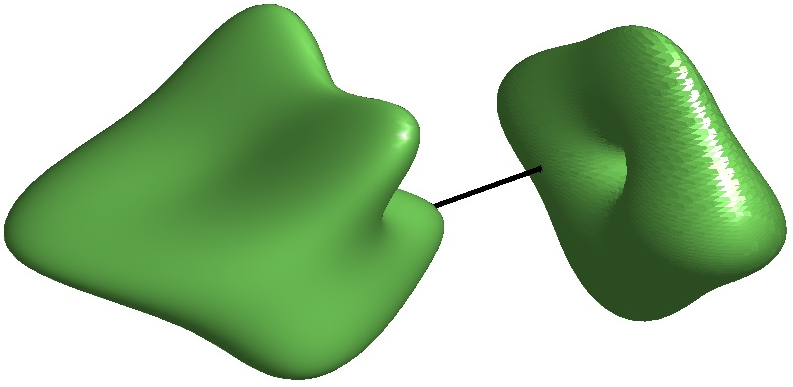}
		\caption{Minimum distance between two (nonconvex) sublevel sets of degree-4 polynomials}
		\label{fig:ex.distance.nonconvex}
	\end{figure}

	
	\textbf{The sos-convex case.} One important special case where we know that the 0-th level of the sos hierarchy in (\ref{eq:distance.lasserre}) is \emph{guaranteed} to be exact is when the defining polynomials $g_i$ and $h_i$ of $\mathcal{S}_1$ and $\mathcal{S}_2$ are \emph{sos-convex}. This is a corollary of the fact that the 0-th level sos relaxation is known to be tight for the general polynomial optimization problem in (\ref{eq:basic.opt}) if the polynomials $p$ and $-g_i$ involved in the description of $K$ there are sos-convex; see~\cite{lasserre2009convexity}. An example of the computation of the minimum distance between two degree-6 sos-convex bodies enclosing human and chair 3D point clouds is given below, together with the points achieving the minimum distance.
	\begin{figure}[h] 
		\centering
		\includegraphics[scale=0.25]{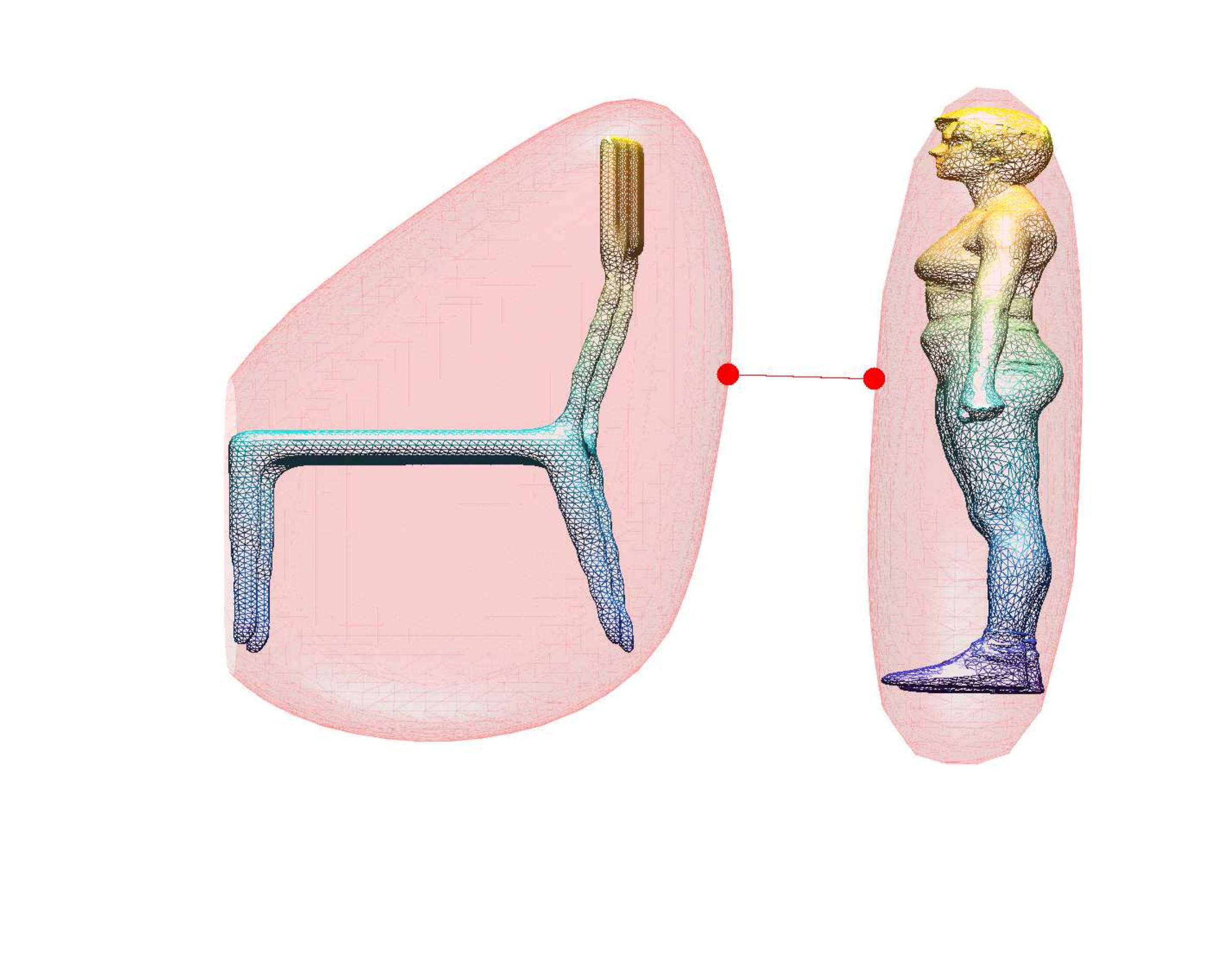}
		\caption{Minimum distance between two convex sublevel sets of degree-6 polynomials}
		\label{fig:ex.distance}
	\end{figure}

	Using MATLAB's fmincon active-set solver, the time required to compute the distance between two sos-convex bodies ranges from around 80 milliseconds to 340 milliseconds seconds as the degree is increased from $2$ to $8$; see Table~\ref{tab:distance.times}. We believe that the execution time can be improved by an order of magnitude with more efficient polynomial representations, warm starts for repeated queries, and reduced convergence tolerance for lower-precision results.}
\begin{table}[H]
	\begin{center}
		\begin{tabular}{ccccc}
			\hline
			degree & 2 & 4 & 6 & 8\\ 
			time (secs) & 0.08 & 0.083 & 0.13 & 0.34 \\
			\hline
		\end{tabular}
		\caption{Euclidean distance query times for sos-convex sets.}
		\label{tab:distance.times}
	\end{center}
\end{table}

\subsection{Penetration measures for overlapping bodies}
\begin{figure*}[t] 
	\includegraphics[height=4cm,width=0.24\linewidth]{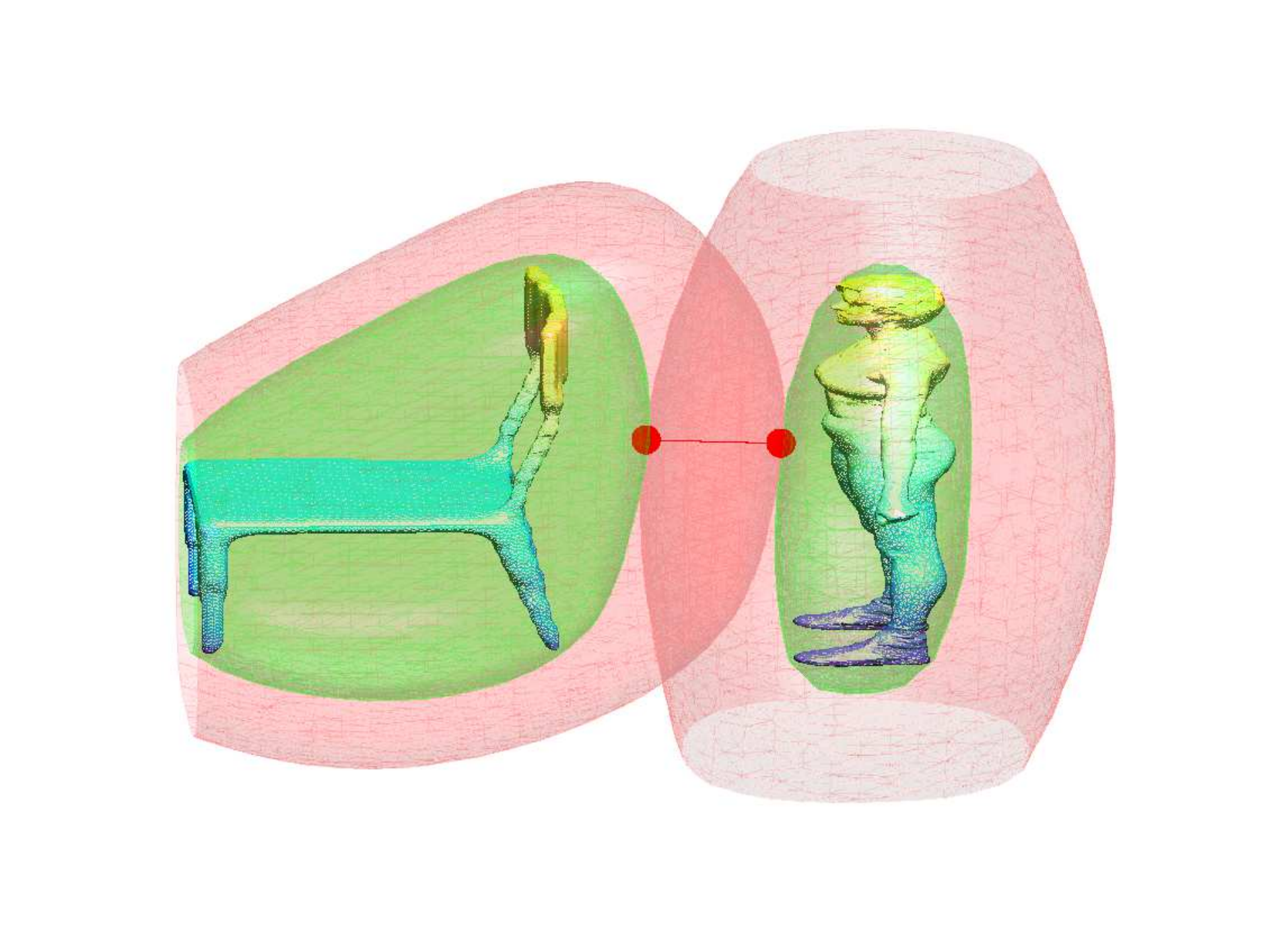}
	\includegraphics[height=4cm,width=0.24\linewidth]{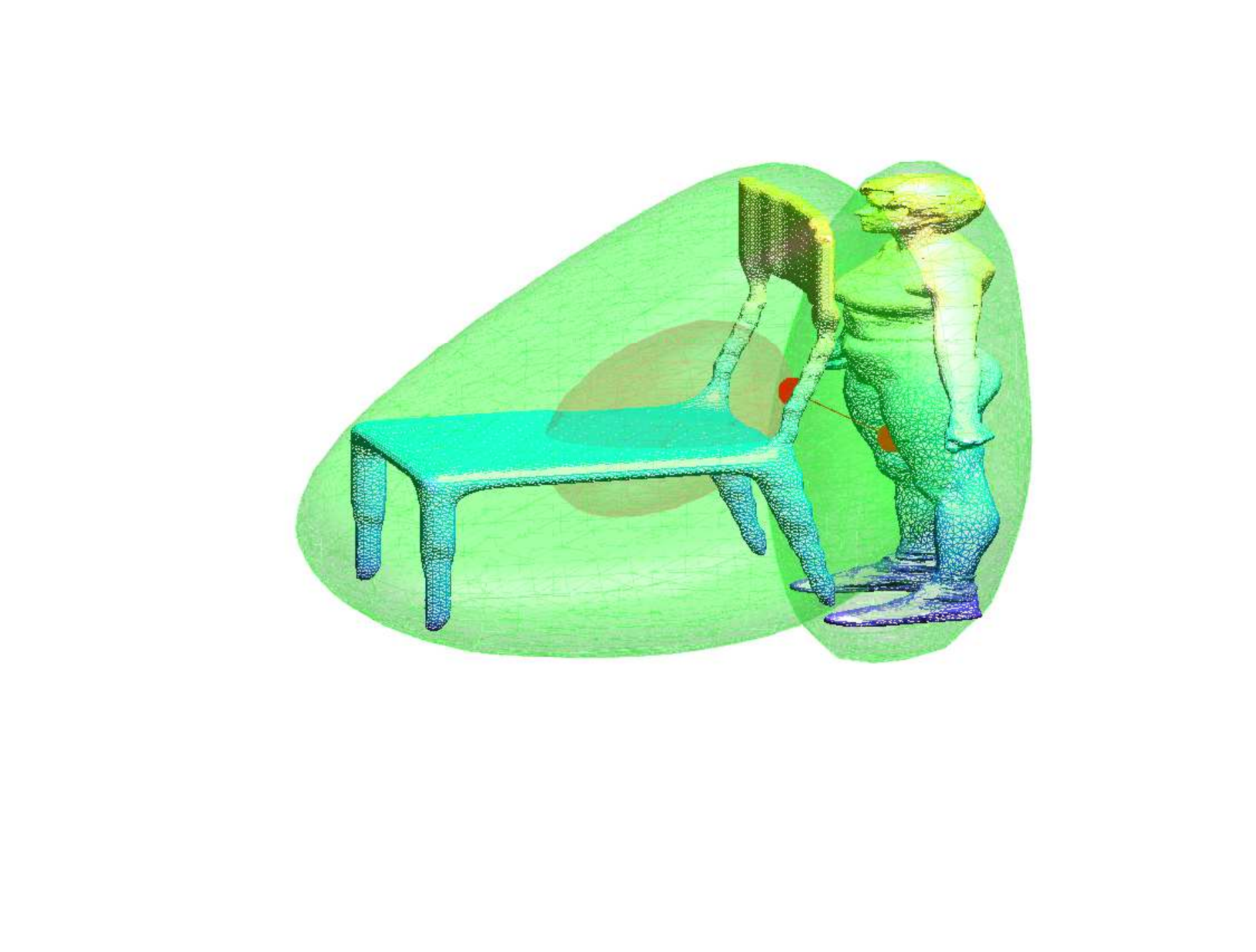}
	\includegraphics[height=4cm, width=0.24\linewidth]{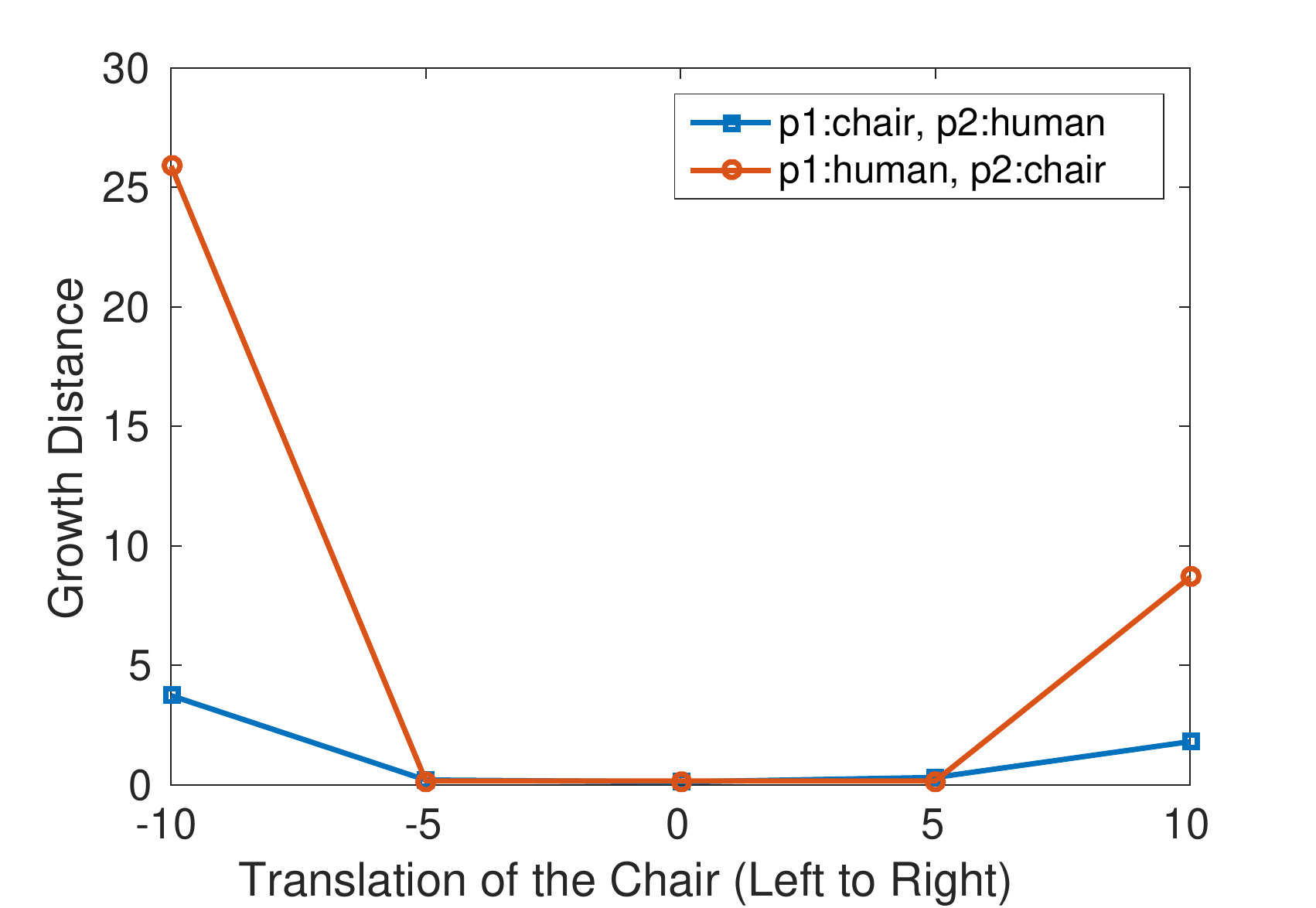}
	\includegraphics[height=4cm, width=0.24\linewidth]{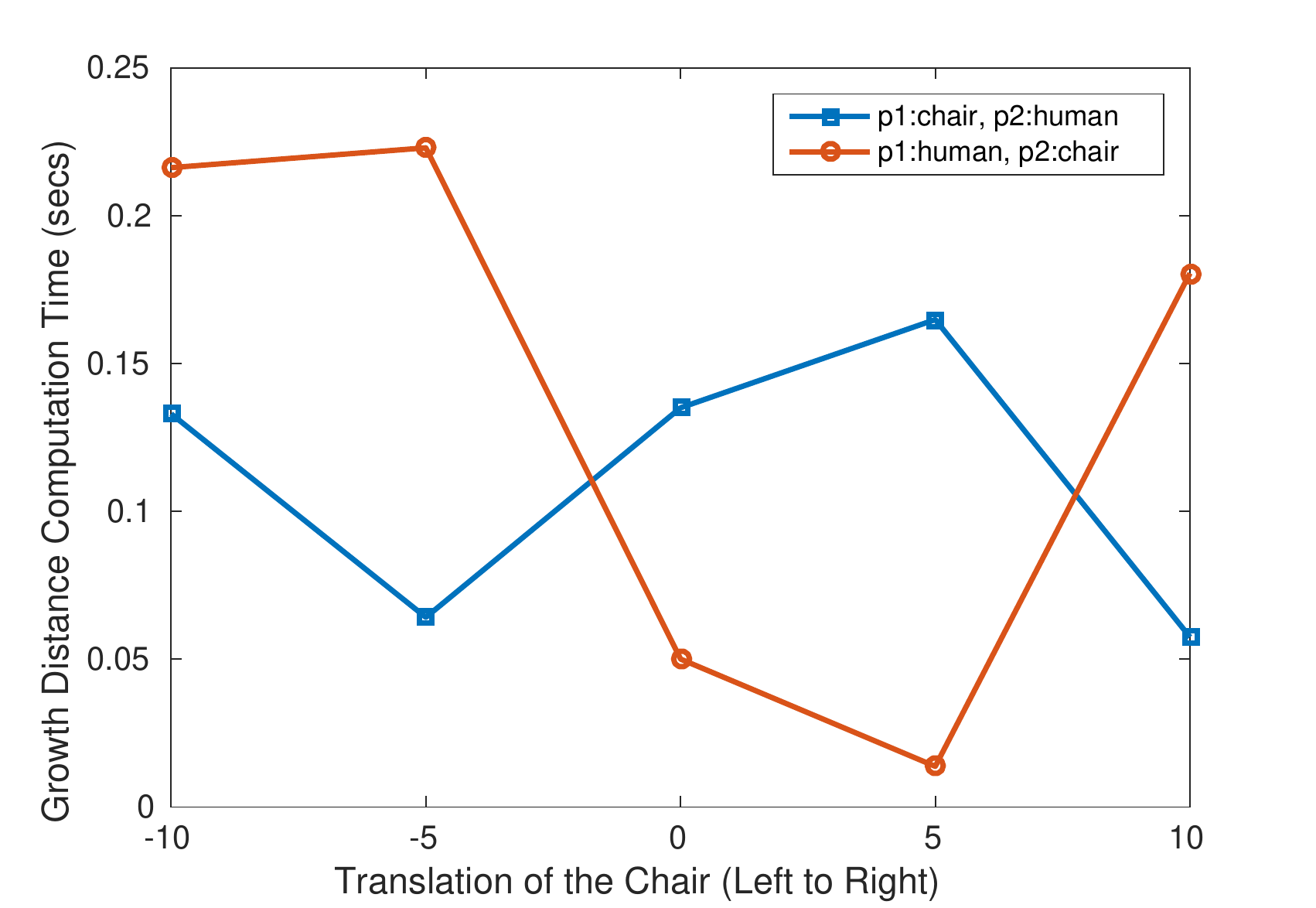}
	\caption{Growth distances for separated (left) or overlapping (second-left) sos-convex bodies; growth distance as a function of the position of the chair (second-right); time taken to solve (\ref{eq:overlap}) with warm-start (right)}
	\label{fig:penetration}
\end{figure*}
As another application of sos-convex polynomial optimization problems, we discuss a problem relevant to collision avoidance. Here, we assume that our two bodies $\mathcal{S}_1$, $\mathcal{S}_2$ are of the form $\mathcal{S}_1\mathrel{\mathop{:}}=\{x~|~ p_1(x)\leq 1\}$ and $\mathcal{S}_2\mathrel{\mathop{:}}=\{x~|~ p_2(x)\leq 1\},$ where $p_1,p_2$ are sos-convex. As shown in Figure \ref{fig:intro_pic} (right), by varying the sublevel value, we can grow or shrink the sos representation of an object. The following convex optimization problem, with optimal value denoted by $d(p_1||p_2)$, provides a measure of separation or penetration between the two bodies:
\begin{align}
&d(p_1||p_2) = \min p_1(x) \nonumber \\
&\text{s.t. } p_2(x) \leq 1. \label{eq:overlap}
\end{align} Note that the measure is asymmetric, i.e., $d(p_1||p_2)\neq d(p_2||p_1)$. It is clear that $$p_2(x) \leq 1 \Rightarrow p_1(x) \geq d(p_1||p_2).$$ In other words, the sets $\{x~|~ p_2(x) \leq 1\}$ and $\{x~|~ p_1(x) \leq d(p_1||p_2)\}$ do not overlap. As a consequence, the optimal value of (\ref{eq:overlap}) gives us a measure of how much we need to shrink the level set defined by $p_1$ to eventually move out of contact of the set $\mathcal{S}_2$ assuming that the ``seed point", i.e., the minimum of $p_1$, is outside ${\cal S}_2$. It is clear that,
\setlength{\parindent}{0em}
\begin{itemize}[nosep,leftmargin=1em,labelwidth=*,align=left]
	\item if $d(p_1||p_2) > 1$, the bounding volumes are separated.
	\item if $d(p_1||p_2) = 1$, the bounding volumes touch.
	\item if $d(p_1||p_2) < 1$, the bounding volumes overlap.
\end{itemize}

These measures are closely related to the notion of growth models and growth distances~\cite{GrowthDistance}. Note that similarly to what is described for the sos-convex case in Section \ref{subsec:eucl.dist},
the optimal solution $d(p_1||p_2)$ to (\ref{eq:overlap}) can be computed exactly using semidefinite programming, or using a generic convex optimizer. The two leftmost subfigures of Figure~\ref{fig:penetration} show a chair and a human bounded by 1-sublevel sets of degree 6 sos-convex polynomials (in green). In both cases, we compute $d(p_1||p_2)$ and $d(p_2||p_1)$ and plot the corresponding minimizers. In the first subfigure, the level set of the chair needs to grow in order to touch the human and vice-versa, certifying separation. In the second subfigure, we translate the chair across the volume occupied by the human so that they overlap. In this case, the level sets need to contract. In the third subfigure, we plot the optimal value of the problem in (\ref{eq:overlap}) as the chair is translated from left to right, showing how the growth distances dip upon penetration and rise upon separation. The final subfigure shows the time taken to solve (\ref{eq:overlap}) when warm started from the previous solution. The time taken is of the order of 150 milliseconds without warm starts to 10 milliseconds with warm starts.

\subsection{Separation and penetration under rigid body motion}
Suppose $\{x ~|~ p({x})\leq 1\}$ is a minimum-volume sos-convex body enclosing a rigid 3D object. If the object is rotated by ${R}\in SO(3)$ and translated by ${t}\in \reals^3$, then the polynomial $p'({x}) = p({R}^T {x} - {R}^T {t})$ encloses the transformed object. This is because, if $p({x}) \leq 1$, then $p'({R}{x} + {t}) \leq 1$. For continuous motion, the optimization for Euclidean distance or sublevel-based separation/penetration distances can be warm started from the previous solution. The computation of the gradient of these measures 
using parametric convex optimization, and exploring the potential of this idea for motion planning is left for future work.

\section{Containment of polynomial sublevel sets} \label{sec:cont.poly.sub}

In this section, we show how the sum of squares machinery can be used in a straightforward manner to contain polynomial sublevel sets (as opposed to point clouds) with a convex polynomial level set. More specifically, we are interested in the following problem: Given a basic semialgebraic set 

\begin{equation}\label{eq:basic.semialgebraic.set.S}
\mathcal{S}\mathrel{\mathop:}=\{x\in\mathbb{R}^n| \ g_1(x)\leq 1,\ldots, g_m(x)\leq 1   \},
\end{equation}
find a convex polynomial $p$ of degree $2d$ such that 
\begin{equation}\label{eq:S.in.sublevelset.p}
\mathcal{S}\subseteq \{x\in\mathbb{R}^n|\ p(x)\leq 1\}.
\end{equation} 
Moreover, we typically want the unit sublevel set of $p$ to have small volume. Note that if we could address this question, then we could also handle a scenario where the unit sublevel set of $p$ is required to contain the union of several basic semialgebraic sets (simply by containing each set separately). For the 3D geometric problems under our consideration, we have two applications of this task in mind:

\begin{itemize}
	\item {\bf Convexification:} In some scenarios, one may have a nonconvex outer approximation of an obstacle (e.g., obtained by the computationally inexpensive inverse moment approach of Lasserre and Pauwels as described in Section \ref{subsec:nonconvex}) and be interested in containing it with a convex set. This would e.g. make the problem of computing distances among obstacles more tractable; cf. Section \ref{sec:distance}. 
	
	\item {\bf Grouping multiple obstacles:} For various navigational tasks involving autonomous agents, one may want to have a mapping of the obstacles in the environment in varying levels of resolution. A relevant problem here is therefore to group obstacles: this would lead to the problem of containing several polynomial sublevel sets with one.
\end{itemize}

In order to solve the problem laid out above, we propose the following sos program:
\begin{align}
&\min_{p \in \mathbb{R}_{2d}[x],\tau_i \in \mathbb{R}_{2\hat{d}}[x],P \in S^{N \times N}} -\log \det(P) \nonumber\\
\text{s.t. } &p(x)=z(x)^TPz(x),  P\succeq 0, \nonumber\\
&p(x) \quad \text{sos-convex,} \label{eq:p.sosconvex}\\
& 1-p(x)-\sum_{i=1}^m \tau_i(x)(1-g_i(x)) \quad \text{sos,} \label{eq:s.procedure}\\ 
& \tau_i(x) \quad \text{sos,} \label{eq:tau.sos}\quad i=1,\ldots,m.
\end{align}


It is straightforward to see that constraints (\ref{eq:s.procedure}) and (\ref{eq:tau.sos}) imply the required set containment criterion in (\ref{eq:S.in.sublevelset.p}). As usual, the constraint in (\ref{eq:p.sosconvex}) ensures convexity of the unit sublevel set of $p$. The objective function attempts to minimize the volume of this set. A natural choice for the degree $2\hat{d}$ of the polynomials $\tau_i$ is $2\hat{d}=2d-\min_i deg(g_i)$, though better results can be obtained by increasing this parameter.

{\ghh An analoguous problem is discussed in recent work by Dabbene, Henrion, and Lagoa~\cite{Henrion1, Henrion}. In the paper, the authors want to find a polynomial $p$ of degree $d$ whose 1-superlevel set $\{x~|~ p(x)\geq 1\}$ contains a semialgebraic set $\mathcal{S}$ and has minimum volume. Assuming that one is given a set $B$ containing $\mathcal{S}$ and over which the integrals of polynomials can be efficiently computed, their method involves searching for a polynomial $p$ of degree $d$ which minimizes $\int_B p(x)dx$ while respecting the constraints $p(x)\geq 1$ on $\mathcal{S}$ and $p(x) \geq 0$ on $B$. Note that the objective is linear in the coefficients of $p$ and that these last two nonnegativity conditions can be made computationally tractable by using the sum of squares relaxation. The advantage of such a formulation lies in the fact that when the degree of the polynomial $p$ increases, the objective value of the problem converges to the true volume of the set $\mathcal{S}$.}
	

\textbf{Example.} In Figure~\ref{fig:mult.cont}, we have drawn in black three random ellipsoids and a degree-4 convex polynomial sublevel set (in yellow) containing the ellipsoids. This degree-4 polynomial was the output of the optimization problem described above where the sos multipliers $\tau_i(x)$ were chosen to have degree $2$.
\begin{figure}[ht]
	\begin{center}
		\includegraphics[scale=0.4]{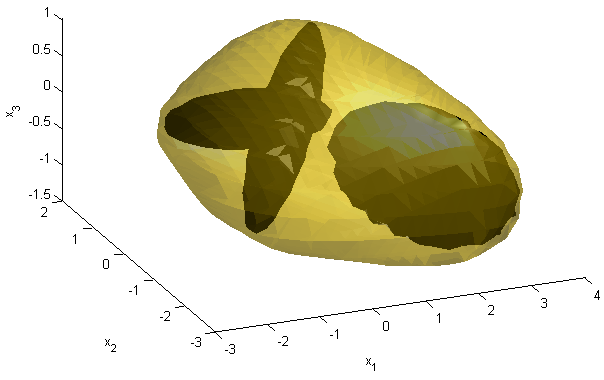}
		\caption{Containment of 3 ellipsoids using a sublevel set of a convex degree-4 polynomial}
		\label{fig:mult.cont}
	\end{center}
\end{figure}

We end by noting that the formulation proposed here is backed up theoretically by the following converse result.

\begin{theorem}
	Suppose the set $\mathcal{S}$ in (\ref{eq:basic.semialgebraic.set.S}) is Archimedean
	and that $\mathcal{S}\subset \{x\in\mathbb{R}^n|\ p(x)\leq 1\}.$ Then there exists an integer $\hat{d}$ and sum of squares polynomials $\tau_1,\ldots,\tau_m$ of degree at most $\hat{d}$ such that 
	\begin{equation}
	1-p(x)-\sum_{i=1}^m \tau_i(x)(1-g_i(x)) 
	\end{equation}
	is a sum of squares.
\end{theorem}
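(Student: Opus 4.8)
The plan is to recognize this statement as essentially a special case of Putinar's Positivstellensatz and to massage the hypotheses into the exact form that theorem consumes. First I would rewrite the containment hypothesis as a pointwise inequality: $\mathcal{S}\subset\{x\in\mathbb{R}^n~|~p(x)\le 1\}$ is by definition the same as saying $1-p(x)\ge 0$ for all $x\in\mathcal{S}$, where $\mathcal{S}=\{x~|~1-g_i(x)\ge 0,\ i=1,\dots,m\}$. So the goal becomes: produce a Putinar-type certificate of nonnegativity of $1-p$ over the quadratic module generated by $1-g_1,\dots,1-g_m$, which is Archimedean by hypothesis.

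The heart of the argument is then a direct appeal to Putinar's theorem \cite{putinar1993positive} (in the form recalled in Section~\ref{sec:sos.convex}): if $1-p$ is strictly positive on $\mathcal{S}$, there exist sum of squares polynomials $\sigma_0,\tau_1,\dots,\tau_m$ with
\[
1-p(x)=\sigma_0(x)+\sum_{i=1}^m\tau_i(x)\,(1-g_i(x)).
\]
Taking $\hat d$ to be the largest degree occurring among the $\tau_i$ then finishes the proof, because $1-p(x)-\sum_{i=1}^m\tau_i(x)(1-g_i(x))$ is literally $\sigma_0(x)$, which is a sum of squares. For completeness I would also note the converse, trivial direction: constraints \eqref{eq:s.procedure}--\eqref{eq:tau.sos} do imply the containment \eqref{eq:S.in.sublevelset.p}, so the theorem really certifies that the SDP \eqref{eq:p.sosconvex}--\eqref{eq:tau.sos} is not missing feasible solutions of the underlying geometric problem.

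The one place that needs genuine care --- and what I expect to be the main obstacle --- is the gap between ``$1-p\ge 0$ on $\mathcal{S}$'' (all the hypothesis gives) and the ``strictly positive'' assumption that Putinar demands, since the boundary $\{p=1\}$ may meet $\mathcal{S}$. When the containment is strict, i.e.\ $\mathcal{S}$ sits inside the open sublevel set $\{p<1\}$, this is a non-issue. For the degenerate case I would introduce a perturbation: for every $\varepsilon>0$ the polynomial $1+\varepsilon-p$ is strictly positive on the compact set $\mathcal{S}$, hence Putinar applies to it and yields SOS multipliers $\tau_i^{(\varepsilon)}$ making $1+\varepsilon-p-\sum_i\tau_i^{(\varepsilon)}(1-g_i)$ a sum of squares; the remaining (and delicate) task is to let $\varepsilon\to 0$ without the degrees of the $\tau_i^{(\varepsilon)}$ blowing up, which is exactly where one would need to invest real effort or strengthen the hypothesis. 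Final degree bookkeeping --- padding $\sigma_0$ or the $\tau_i$ with lower-order SOS terms so all degrees line up with $\hat d$ --- is routine.
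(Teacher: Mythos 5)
Your proposal coincides with the paper's proof, which is literally a one-line ``standard application of Putinar's Positivstellensatz'' with all details omitted: recasting the containment as nonnegativity of $1-p$ over the Archimedean quadratic module generated by $1-g_1,\ldots,1-g_m$ and invoking \cite{putinar1993positive} is exactly the intended argument, so you have reproduced (and slightly expanded) the paper's route. The strict-positivity caveat you flag is real---Putinar needs $1-p>0$ on $\mathcal{S}$, and your $\varepsilon$-perturbation cannot in general be passed to the limit with bounded degrees, since merely nonnegative polynomials with zeros on $\mathcal{S}$ may fail to lie in the quadratic module---but the paper glosses over this point too, so under the natural reading $\mathcal{S}\subset\{x~|~p(x)<1\}$ your argument is complete and essentially identical to the paper's.
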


\begin{proof}
	The proof follows from a standard application of Putinar's Positivstellensatz \cite{putinar1993positive} and is omitted.
\end{proof}

\section{Future directions}\label{sec:conclusions}
Our results open multiple application areas for future work. 

\setlength{\parindent}{0em}
\begin{itemize}
\item Given the efficiency of our distance calculations (Section \ref{sec:distance}), it is natural to investigate performance in a real-time 3D motion planning setting. Critical objectives would be handling dynamic updates, robustness of bounding volume estimation with noisy point clouds, and investigating the viability of hierarchical bounding volumes.
\item Employing sos bodies for control of articulated objects (e.g., human motion) would provide a formulation for avoiding self-intersection, but also introduce the challenge of handling dynamic shape deformations.
\item Bounding volumes are ubiquitous in rendering applications, e.g., object culling and ray tracing. For sos bodies these ray-surface intersection operations can be framed as the distance calculations presented in Section \ref{sec:distance}. It would be interesting to explore how such techniques would perform when integrated within GPU-optimized rendering and game engine frameworks.
\item The recent works on ``dsos/sdsos" \cite{iSOS_journal} and ``dsos-convex/sdsos-convex"~\cite{DCP} polynomials provide alternatives to sos and sos-convex polynomials which are amenable to linear and second order cone programming instead of semidefinite programming. An exploration of the speed-ups offered by these approaches for bounding volume computations and their potential for use in real-time applications is left for future work. 
\end{itemize}

\section*{Acknowledgements} We thank Erwin Coumans, Mrinal Kalakrishnan and Vincent Vanhoucke for several technically insightful discussions and guidance.
\small{
\bibliographystyle{plainnat}
\bibliography{soscvx}
}
\end{document}